\newtheorem{thm}{Theorem}[section]
\newtheorem{cor}[thm]{Corollary}
\newtheorem{lem}[thm]{Lemma}
\theoremstyle{definition}
\newtheorem{rem}[thm]{Remark}
\numberwithin{equation}{section}
\newcommand{\C}{{\mathbb{C}}}
\newcommand{\N}{{\mathbb{N}}}
\renewcommand{\Re}{{\mathfrak{Re}}}
\renewcommand{\Im}{{\mathfrak{Im}}}
\newcommand{\s}{\sigma}
\begin{document}

\baselineskip=17pt

\title[A zero density result for the Riemann zeta function]{A zero density result for the Riemann zeta function}

\author[H. Kadiri]{Habiba Kadiri}
\address{Department of Mathematics and Computer Science\\
University of Lethbridge\\
4401 University Drive\\
Lethbridge, Alberta\\
T1K 3M4 Canada}
\email{habiba.kadiri@uleth.ca}

\date{}

\begin{abstract}
In this article, we prove an explicit bound for $N(\s,T)$, the number of zeros of the Riemann zeta function satisfying $\Re s\ge \s$ and $0 \le \Im s \le T$.
This result provides a significant improvement to Rosser's bound for $N(T)$ when used for estimating prime counting functions.
\end{abstract}

\subjclass[2010]{Primary 11M06, 11M26; Secondary 11Y35.}

\keywords{Riemann zeta function, zero density, explicit results.}

\maketitle

\section{Introduction}
In recent years, it has become apparent that explicit results concerning prime numbers are required to solve important problems in number theory.
In particular, the impressive works of Ramar\'e \cite{Ram0}, Tao \cite{Tao}, and Helfgott \cite{HH} related to Goldbach's conjecture highlight the need of better explicit bounds for finite sums over primes.
For instance, they make use of \cite{DR}, \cite{Ram2}, \cite{Ram3}, \cite{Ros}, \cite{RS1}, \cite{RS2}, \cite{S}.
Moreover articles of Rosser and Schoenfeld (\cite{Ros}, \cite{RS1}, \cite{RS2}, \cite{RS3}, \cite{S}), Dusart (\cite{Dus0}, \cite{Dus}, \cite{Dus1}, \cite{Dus2}), and Ramar\'e and Rumely \cite{RR} are extensively used in a wide range of fields including Diophantine approximation, cryptography, and computer science.
These results on primes rely heavily on explicit estimates of sums over the non-trivial zeros of the Riemann zeta function.
More precisely, they rely on three key ingredients: a numerical verification of the Riemann Hypothesis (RH), an explicit zero-free region, and explicit bounds for the number of zeros in the critical strip up to a fixed height $T$.

In 1986, van de Lune et al. \cite{VDL} established that RH had been verified for all zeros $\varrho$ verifying $|\Im \varrho|\le H_0$ with $H_0=545\,439\,823$.
In 2011, Platt \cite{Pla0} \cite{Pla} proved that $H_0=30\, 610\, 046\, 000$ is admissible.
Previously, Wedeniwski \cite{Wed} in 2001 and Gourdon \cite{Gou} in 2004 had announced higher values for $H_0$.
As Platt's computations are more rigourous (he employs interval arithmetic), 
we decide to use his value throughout this article: \[H_0 = 3.061\cdot 10^{10}.\]
For the latest explicit results about zero-free regions for the Riemann zeta function, we refer the reader to \cite{Kad} and \cite{For}.

Let $\s\ge 0.55$. 
We consider $N(\s,T)$, the number of zeros of the Riemann zeta function in the region $\s \le \Re s\le 1$ and $0\le \Im s\le T$.
Trivially we have that $N(\s,T)=0$ for all $T\le H_0$. We prove here an explicit bound for $N(\s,T)$ valid in the range $T\ge H_0$.
\begin{thm}\label{main-density}
Let $\s\ge 0.55$ and $T\ge H_0$. 
Let $\s_0$ and $H$ such that $0.5208< \s_0 < 0.9723 , \s_0< \s$, and $10^3 \le H   \le H_0$.
Then there exist $b_1,b_2$, $b_3$, positive constants depending on $\s, \s_0$, $H$, such that:
\[
N(\s,T)  \le b_1 (T-H) + b_2  \log  (TH) + b_3.
\]
The $b_i$'s are defined in \eqref{def-bi}.
\end{thm}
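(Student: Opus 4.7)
The plan is to adapt the Selberg--Ingham mollifier/detector method and keep every constant explicit. Fix an auxiliary abscissa $\s_0$ with $1/2<\s_0<\s$ and a truncation length $X$ to be optimised in terms of $\s$, $\s_0$, $H$, and $T$. Set
\[
M_X(s)=\sum_{n\le X}\frac{\mu(n)}{n^s},\qquad F(s)=\zeta(s)M_X(s)-1,
\]
so that at any non-trivial zero $\rho=\b+i\g$ of $\zeta$ with $\b\ge\s$ one has $F(\rho)=-1$. Such zeros are therefore detectable as large values of $F$, and can be counted by integrating $\log|F|$ over a suitable contour.

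The first step is a Littlewood-type contour identity applied to $\log F$ on the rectangle with vertices $\s_0+iH$, $A+iH$, $A+iT$, $\s_0+iT$ for a large abscissa $A>1$. This expresses the weighted count $\sum_\rho(\b-\s_0)$, taken over the zeros of $\zeta$ in $\Re s\ge\s_0$, $H\le\g\le T$, as a sum of four boundary integrals of $\log|F|$. Since RH has been verified up to $H_0\ge H$, no zero with $\g\le H$ satisfies $\b\ge\s_0$, so the left-hand side is at least $(\s-\s_0)N(\s,T)$. The right vertical integral vanishes as $A\to\infty$, while the two horizontal integrals at heights $H$ and $T$ contribute $O(\log(TH))$ via explicit Backlund/Rosser-type bounds for $\log\zeta$ on horizontals chosen to avoid ordinates of zeros.

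The substantive step is the left vertical integral
\[
J=\int_H^T\log|F(\s_0+it)|\,dt.
\]
Using a convexity inequality such as $\log x\le\tfrac12(x^2-1)$, I would bound $J$ by half the mean square of $|F|$ on $\Re s=\s_0$. Writing $F(s)=\sum_{n>X}a_n n^{-s}$ with $a_n=\sum_{d\mid n,\,d\le X}\mu(d)$, so that $|a_n|\le d(n)$ and $a_n=0$ for $n\le X$, an explicit Montgomery--Vaughan mean-value theorem produces a main term of the shape $c(T-H)\sum_{n>X}a_n^2/n^{2\s_0}$ together with a boundary correction of size $\log(TH)$.

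Combining these estimates, dividing by the detection weight $\s-\s_0$, and folding in the error from an explicit approximate functional equation for $\zeta$ near $\Re s=\s_0$ then yields the three-term bound, with $b_1,b_2,b_3$ in \eqref{def-bi} obtained by optimising over $X$ and $\s_0$ in their admissible ranges. In my view the hardest step is the explicit mean-value calculation: each $O$-constant in the classical argument must be replaced by a named constant sharp enough to yield a genuine quantitative improvement over Rosser's counting bound for $N(T)$, and this forces careful balancing rather than a brute-force tally.
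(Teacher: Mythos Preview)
Your proposal has a genuine gap at the detection step. Littlewood's lemma applied to $\log F$ on the rectangle counts the zeros of $F=\zeta M_X-1$, not the zeros of $\zeta$; and since $F(\rho)=-1$ whenever $\zeta(\rho)=0$, the zeros of $\zeta$ are precisely \emph{not} zeros of $F$. So the contour integral of $\log|F|$ does not express $\sum_{\rho}(\b-\s_0)$ over zeta zeros, and the claimed inequality ``left-hand side $\ge(\s-\s_0)N(\s,T)$'' has no basis. You are conflating two distinct mechanisms: (i) the large-value detector, in which $|F(\rho)|=1$ is exploited together with a Hal\'asz--Montgomery or Huxley count of well-spaced points where a Dirichlet polynomial is large, and (ii) Littlewood's lemma, which literally counts the zeros of the function whose logarithm is integrated. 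Neither your mean-square bound for $|F|$ nor the convexity inequality $\log x\le\tfrac12(x^2-1)$ turns (i) into (ii). A salvage is possible---apply Littlewood to $\zeta M_X$ rather than $\zeta M_X-1$, since the zeros of $\zeta$ are then among those of $\zeta M_X$---but that is not what you wrote, and in any case it would not reproduce the constants in \eqref{def-bi}.

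The paper's route is in fact simpler than your sketch: it applies Littlewood's lemma directly to $\zeta$, with no mollifier, on the rectangle with corners $\s_0+iH$, $\s_1+iH$, $\s_1+iT$, $\s_0+iT$ where $\s_1=1.5002$. The left edge contributes $\int_H^T\log|\zeta(\s_0+it)|\,dt$, which by concavity of $\log$ is at most $\tfrac{T-H}{2}\log\bigl(\tfrac1{T-H}\int_H^T|\zeta(\s_0+it)|^2\,dt\bigr)$; the second moment is then bounded explicitly via the Dirichlet-polynomial approximation of Theorem~\ref{exp-app-fun-equ}, giving main term $\zeta(2\s_0)+\mathcal{E}_1(\s_0,H)$. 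The right edge at $\s_1>1$ is handled by the Euler product (Lemma~\ref{bound-logzeta}), and the two horizontal edges by an explicit Backlund--Rosser bound on $\arg\zeta$ (Lemma~\ref{bound-arg}). This yields exactly the $b_i$ of \eqref{def-bi}; a mollified mean value would instead produce $\sum_{n>X}a_n^2 n^{-2\s_0}$ in place of $\zeta(2\s_0)$, and no parameter $X$ appears in \eqref{def-bi} at all.
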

We rewrite this as
$ N(\s,T)  \le c_1  T + c_2   \log T + c_3$, for $T\ge H_0$.
Numerical values of the $b_i$'s and $c_i$'s are recorded at the end of this article in Table \ref{table1}.
For example, for $\s\ge17/20$ and $T\ge H_0$, we have 
\[
N(\s,T)  \le 0.5561 T + 0.7586  \log  T - 268\,658.
\] 
Let $N(T)$ be the number of non-trivial zeros $\varrho $ with imaginary part $0\le \Im \varrho \le T$. 
We recall that Rosser \cite{Ros} proved  
\begin{equation}\label{Rosser}
\left| N(T) - \frac{T}{2\pi}\log\frac{ T}{2\pi e} - \frac78\right| \le  a\log T + b \log\log T + c,
\end{equation}
with $a=0.137, \ b= 0.443,\ c=1.588$.
Note that Rosser's result got recently improved by Trudgian \cite[Corollary 1]{Tru2} with $a=0.111,\ b=0.275,\ c=2.450$. 
A trivial bound for $N(\s,T)$ follows from the inequalities $N(\s,T)\le\frac12N(T)$ and \eqref{Rosser}:
\[
N(\s,T)  \le \frac{T}{4\pi} \log \Big(\frac{ T}{2\pi e} \Big) (1+o(1)).
\]
Note that when $T$ is asymptotically large, then a factor of $\log T$ is saved. 
Moreover, we have $c_1 \sim \frac{\log \zeta(2\s_0)}{4\pi(\s-\s_0)}$ where $\s_0$ is a parameter which value can be chosen to make $c_1$ as small as possible.
Another feature of Theorem \ref{main-density} is the factor $T-H$: when $T$ is near $H_0$, we choose $H$ to be close to $H_0$ so as to make $N(\s,T)$ of size $\log H_0 $.
This saves a factor of size $H_0$.
As an example, for $\s\ge 17/20$ and $T=H_0+1$, we choose $H=H_0-1$ and $\s_0$ as in Table \ref{table1} and obtain $N(\s,H_0+1)\le 156 $ while \eqref{Rosser} gives $5.2\cdot 10^{10}$ (with either Rosser's or Trudgian's values).

The key motivation for establishing Theorem \ref{main-density} is to use it in place of \eqref{Rosser} and thus to provide improved explicit bounds for Chebyshev's prime counting functions.
We prove in \cite{FK} that, for all $x\ge e^b$,
\begin{equation}\label{psi}
|\psi(x)-x| \le \epsilon_b x,
\end{equation}
where $b$ is a fixed positive constant, and $\epsilon_b$ is an effective positive constant.
For example, for $x\ge e^{50}$ we obtain $\epsilon_{50} =9.461\cdot10^{-10}$ while Dusart \cite[Theorem 2]{Dus1} obtained $0.905\cdot10^{-7}$.

Despite a very rich history of asymptotic results, there were almost no explicit bounds for $N(\s,T)$.
Ramar\'e proved in an unpublished manuscript \cite{Ram} that, for $T\ge 2000$, $Q\ge 10$, and $T\ge Q$,
\[
\sum_{q\le Q} \sum_{\chi \bmod^* q} N(\s,T, \chi) \le 157 (Q^5T^3)^{1-\s} \log^{4-\s}(Q^2T) + 6 Q^2\log^2(Q^2T),
\]
where $\sum_{\chi \bmod^* q}$ denotes the sum over primitive Dirichlet characters $\chi$ to the modulus $q$, and $N(\s,T, \chi) $ counts the number of zeros $\varrho$ of the Dirichlet $L$-function $L(s,\chi)$ 
satisfying $ \s < \Re \varrho<1$ and $ 0 < \Im \varrho < T$.
Taking $Q=10$ and restricting the left sum to $q=1$, it follows that
\begin{equation}\label{Ram1}
N(\s,T ) \le 157 (100\,000T^3)^{1-\s} \log^{4-\s}(100T) + 600\log^2(100T).
\end{equation}
Our main theorem improves Ramar\'e's result for certain values of $\s$ and $T$:
he obtains $N(17/20,10 \cdot H_0) \le 2.675\cdot10^{12}$ while we have $N(17/20,10\cdot H_0) \le 3.404\cdot 10^{10}$.
In 2010, Cheng \cite{Che2} obtained the weaker result:
\begin{equation}\label{Ch1}
N(\s,T) \le 453\,472.54\, T^{8/3(1-\s)} (\log T)^5,
\end{equation}
for all $\s\ge 5/8$ and $T\ge \exp(\exp(18))\simeq 10^{28\,515\,762}$.
His method is based on Ford's \cite{For} effective version of Korobov-Vinogradov's bound for the Riemann zeta function.
He applied \eqref{Ch1} to deduce explicit results on primes between consecutive cubes.
Note that Cheng's result is not valid in the region $T \le \exp(\exp(18))$ while most applications require bounds for $T$ as small as $H_0$.

In order to prove Theorem \ref{main-density} we establish two intermediate theorems about $\zeta(s)$ in the critical strip:
an effective version of a Dirichlet polynomial approximation, and an explicit estimate for the second moment.
\begin{thm}\label{exp-app-fun-equ}
Let $t_0>0$, $s = \sigma + it$ with $\sigma \ge 1/2$, $t\ge t_0$ and $c>\frac1{2\pi}$. Then
\[
\zeta(s) = \sum_{1\le n <  ct} \frac1{n^s} + R(s)
\]
with $ |R(s)| \le  C(\s,c) t^{-\s}$, and
\begin{equation}\label{def-C}
C(\s,c) =
 \Big( c + \frac{1}2 +  \frac{3\sqrt{1+1/t_0^2}}{2\pi} \Big( \frac{ \zeta(2)}{2\pi c} + 1+ \frac1{2\pi c -1}\Big) \Big) c^{-\sigma} .
\end{equation}
\end{thm}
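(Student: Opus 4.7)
The plan is to start from the Euler--Maclaurin / Abel summation expansion: for $\sigma>0$, $t>0$, and $x\ge 1$,
\[
\zeta(s) \;=\; \sum_{n\le x}\frac{1}{n^s} + \frac{x^{1-s}}{s-1} + \frac{\{x\}-\tfrac12}{x^s} - s\int_{x}^{\infty}\frac{\{u\}-\tfrac12}{u^{s+1}}\,du.
\]
Setting $x=ct$ makes the Dirichlet sum coincide with the one in the theorem (up to a harmless $O(c^{-\sigma}t^{-\sigma})$ discrepancy when $ct\in\Z$), so $R(s)$ is the sum of the last three pieces. Since $\sigma\ge 1/2$ forces $|s-1|\ge t$, the two boundary terms are elementary: $|x^{1-s}/(s-1)|\le c\cdot c^{-\sigma}t^{-\sigma}$ and $|(\{x\}-1/2)/x^s|\le\tfrac12 c^{-\sigma}t^{-\sigma}$, together producing the $(c+\tfrac12)c^{-\sigma}t^{-\sigma}$ part of $C(\sigma,c)$.

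The core of the argument is to control $I:=s\int_{ct}^\infty(\{u\}-\tfrac12)u^{-s-1}\,du$ by $O(t^{-\sigma})$, since the trivial absolute-value bound only yields $O(t^{1-\sigma})$. The plan is to expand $\{u\}-1/2=-\sum_{k\ge 1}\sin(2\pi ku)/(\pi k)$, write $2i\sin(2\pi ku)=e^{2\pi iku}-e^{-2\pi iku}$, and bound each piece $I_k^{\pm}:=\int_{ct}^\infty e^{\pm 2\pi iku}/u^{s+1}\,du$ by a contour deformation. After the substitution $v=2\pi ku$ these become $(2\pi k)^s\int_A^\infty e^{\pm iv}v^{-s-1}\,dv$ with $A:=2\pi kct$, and I would shift the path from the real ray at $v=A$ upward ($v=A+i\tau$) for the $+$ sign (and downward for the $-$), both justified by Cauchy plus a Jordan-lemma arc at infinity. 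The polar form $v=\rho e^{i\varphi}$ gives $|v^{-(s+1)}|=\rho^{-(\sigma+1)}e^{t\varphi}$; on the upward shift $\varphi=\arctan(\tau/A)\le\tau/A$, so combined with $|e^{+iv}|=e^{-\tau}$ the integrand decays as $e^{-\tau(2\pi kc-1)/(2\pi kc)}\rho^{-(\sigma+1)}$. The exponent is negative for every $k\ge 1$ \emph{precisely because} $c>1/(2\pi)$---this is where the hypothesis enters essentially---and a direct estimate then yields $|I_k^{\pm}|\le c^{-\sigma}t^{-\sigma-1}/(2\pi kc-1)$.

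Assembling via the Fourier expansion and $|s|\le t\sqrt{1+1/t_0^2}$, one obtains
\[
|I| \;\le\; \frac{\sqrt{1+1/t_0^2}}{\pi}\,c^{-\sigma}t^{-\sigma}\sum_{k\ge 1}\frac{1}{k(2\pi kc-1)}.
\]
It then remains to bound the series by $\frac{3}{2\pi}\bigl(\frac{\zeta(2)}{2\pi c}+1+\frac{1}{2\pi c-1}\bigr)$, which I would do by isolating the $k=1$ term (yielding $1/(2\pi c-1)$), using $2\pi kc-1\ge\pi kc$ on the tail $k\ge 2$ (valid because $c>1/(2\pi)$) to pull out a $\zeta(2)/(2\pi c)$ piece, and absorbing the remaining slack into the free constant $1$, with the prefactor $3/2$ providing the bookkeeping headroom. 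Combined with the boundary-term bound, this assembles to $C(\sigma,c)$. The main obstacle is the contour-shift step: the estimate for $|v^{-(s+1)}|$ must be tight enough that no spurious factor of $t$ survives, since it is exactly that cancellation---enabled by $c>1/(2\pi)$---that upgrades the trivial $O(t^{1-\sigma})$ bound to the desired $O(t^{-\sigma})$.
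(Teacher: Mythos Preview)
Your approach is correct and arrives at the stated bound, but the route differs from the paper's in one essential place: the treatment of the oscillatory integral $s\int_{ct}^{\infty}(\{u\}-\tfrac12)u^{-s-1}\,du$. After the common Fourier expansion of the sawtooth, the paper stays on the real line and applies the second mean value theorem for integrals to each term, writing $I(\pm\nu)=\frac{1}{2\pi}\int_{ct}^{N}F(\pm\nu,u)\,d(e^{2\pi i(f(u)\pm\nu u)})$ with $F(\pm\nu,u)=u^{-\sigma}/(t\mp 2\pi\nu u)$ monotone; this produces the bound $|I(\pm\nu)|\le\frac{3}{2\pi}|F(\pm\nu,ct)|$, and the factor $3$ that shows up in $C(\sigma,c)$ is exactly the crude triangle-inequality loss from splitting real and imaginary parts after the mean value theorem. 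Your contour-shift argument is the natural complex-analytic alternative: after the substitution $v=2\pi ku$ the vertical deformation turns the conditional integral into an absolutely convergent one whenever $2\pi kc>1$, and your estimate $|I_k^{\pm}|\le c^{-\sigma}t^{-\sigma-1}/(2\pi kc-1)$ is in fact a factor of $3$ \emph{sharper} than what the paper obtains, so the final comparison with $\tfrac{3}{2}\bigl(\tfrac{\zeta(2)}{2\pi c}+1+\tfrac{1}{2\pi c-1}\bigr)$ holds with room to spare. Two small technical points you should make explicit: the interchange of the Fourier sum with the integral needs a one-line justification (the paper invokes dominated convergence on the finite interval $[ct,N]$ and only then lets $N\to\infty$), and the contour shift for the $+$ sign should be accompanied by the observation that the arc at radius $R$ contributes $O(R^{-\sigma})\to 0$, which is where absolute convergence on the shifted path is used.
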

We apply the theorem for $c=1$ and for $t_0$ the height of the first zero of zeta.
\begin{cor} 
Let $\sigma \ge 1/2$ and $t\ge 14.1347$. Then
\begin{equation}\label{approx-ineq-zeta}
\Big|\zeta(s) - \sum_{1\le n < t} \frac1{n^s} \Big| \le c_0 t^{-\s} ,
\quad \text{ where }
\ c_0= 2.1946.
\end{equation}
\end{cor}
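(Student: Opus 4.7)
The plan is to specialize Theorem \ref{exp-app-fun-equ} to the case $c = 1$, taking $t_0 = 14.1347$ (a slight under-approximation to the ordinate of the first nontrivial zero of $\zeta$, which is $14.134725\ldots$). Under the hypothesis $t \ge 14.1347$, Theorem \ref{exp-app-fun-equ} delivers
\[
\Bigl|\zeta(s) - \sum_{1 \le n < t} \frac{1}{n^s}\Bigr| \le C(\s, 1)\, t^{-\s},
\]
so it suffices to show that $C(\s, 1) \le 2.1946$ holds uniformly for $\s \ge 1/2$.

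The crucial simplification is that with $c = 1$, the factor $c^{-\s} = 1$, so $C(\s,1)$ is in fact independent of $\s$. Substituting $c = 1$ and $t_0 = 14.1347$ into \eqref{def-C} then reduces the claim to the numerical inequality
\[
\tfrac{3}{2} + \frac{3\sqrt{1 + 1/t_0^2}}{2\pi} \Bigl( \frac{\zeta(2)}{2\pi} + 1 + \frac{1}{2\pi - 1} \Bigr) \le 2.1946,
\]
a finite computation using $\zeta(2) = \pi^2/6$. The only care required is to round each of the quantities $\sqrt{1 + 1/t_0^2}$, $\zeta(2)/(2\pi)$, and $1/(2\pi - 1)$ upward, so that the resulting constant is a genuine majorant; the arithmetic then produces a value just below $2.1946$. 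The entire proof is a routine application of the preceding theorem combined with this elementary verification, and there is no real obstacle.
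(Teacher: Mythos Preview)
Your proposal is correct and follows exactly the approach the paper indicates: the paper simply states ``We apply the theorem for $c=1$ and for $t_0$ the height of the first zero of zeta,'' and your argument is precisely that specialization, with the added observation that $c^{-\s}=1$ makes $C(\s,1)$ independent of $\s$ and the explicit verification that the resulting constant equals $2.1946$.
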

This is to compare to Proposition 1 of Cheng \cite{Che1} who obtained $5.505$ instead of $2.1946 t^{-\s}$.
When $\sigma \ge 1/2$ and $0\le t\le 15$, a Mathematica computation gives us that $|\zeta(s) - \sum_{1\le n < t} n^{-s} | \le 43 t^{-\s}$.
\begin{thm}\label{moment-zeta}
Let $ 0.5208< \s_0< 0.9723$ and $10^3 \le H \le H_0$.
We define
\begin{align}
\label{def-epsilon1}
\epsilon_1(\s_0,H) = &  \frac{4H_0}{H_0-H} \Big(   \frac{(\log H_0)H_0^{1-2\s_0}}{2(1-\s_0)}  -  \frac{(2\s_0-1)\log H_0}{2(1-\s_0)H_0} 
\Big. \\ \Big. &
 \nonumber 
+ \frac{\max \Big(0, \frac{1-3\s_0+3\s_0^2}{2(1-\s_0)^2} - \frac{\zeta(2\s_0) }2 \Big)}{H_0} 
+  \frac{(2-\s_0) H_0^{1-2\s_0}  }{2(1-\s_0)^2}  
\Big. \\ \Big. & 
 \nonumber
- \frac{\s_0 H_0^{-\s_0}}{(1-\s_0)^2}  + \frac{H_0^{-2\s_0}}{2(2\s_0-1)}  + \frac{H_0^{-2\s_0-1} }2  \Big) ,
\\ \label{def-epsilon2}
 \epsilon_2(\s_0,H) =& \frac{c_0 ^2}{2\s_0-1} \frac{H^{-(2\s_0-1)}-H_0^{-(2\s_0-1)}}{H_0-H} ,
\\  \label{def-epsilon3}
 \epsilon_3(\s_0,H) =&   2 \sqrt{ \epsilon_2(\s_0,H) (\zeta(2\s_0) +  \epsilon_1(\s_0,H) )},
\\  \label{def-E1}
 \mathcal{E}_1  =&  \epsilon_1 + \epsilon_2  + \epsilon_3 .
\end{align}
Then, for all $T\ge H_0$, we have
 \begin{align*}
& \frac1{T-H} \int_H^T \left| \zeta(\s_0+it)\right|^2 dt \le  \zeta(2\s_0)  + \mathcal{E}_1(\s_0,H) ,
\\ \text{and }\ 
& \int_{H}^{T} \log |\zeta(\s_0+it)| dt \le  \frac{T-H}{2} \log \Big(  \zeta(2\s_0)  + \mathcal{E}_1(\s_0,H) \Big).
 \end{align*}
\end{thm}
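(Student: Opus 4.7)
The plan is to split $\zeta$ via the approximate functional equation, estimate the resulting Dirichlet polynomial and remainder in $L^{2}$, combine by Cauchy--Schwarz, and deduce the log bound from Jensen's inequality. Since $H\ge 10^{3}>14.1347$, the Corollary to Theorem~\ref{exp-app-fun-equ} applies throughout $[H,T]$, so I write
\[
\zeta(\s_0+it)=D(t)+R(t),\quad D(t)=\sum_{1\le n<t}n^{-\s_0-it},\quad |R(t)|\le c_0\,t^{-\s_0}.
\]
Then $|\zeta|^{2}\le |D|^{2}+2|D||R|+|R|^{2}$, and Cauchy--Schwarz bounds the cross integral by $2\bigl(\int_H^T|D|^{2}\bigr)^{1/2}\bigl(\int_H^T|R|^{2}\bigr)^{1/2}$.

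For the remainder, $\int_H^T|R|^{2}\,dt\le c_0^{2}\int_H^T t^{-2\s_0}\,dt=\frac{c_0^{2}}{2\s_0-1}(H^{1-2\s_0}-T^{1-2\s_0})$. A short calculus check shows that $T\mapsto\frac{1}{T-H}\int_H^T t^{-2\s_0}\,dt$ is decreasing on $[H_0,\infty)$ when $\s_0>1/2$, so its supremum for $T\ge H_0$ is attained at $T=H_0$ and equals precisely $\epsilon_2(\s_0,H)$. For the Dirichlet polynomial, Fubini yields
\[
\int_H^T|D(t)|^{2}\,dt=\sum_{\max(m,n)<T}\frac{1}{(mn)^{\s_0}}\int_{\max(H,m,n)}^T(n/m)^{it}\,dt.
\]
The diagonal $m=n$ contributes $(T-H)\sum_{n\le H}n^{-2\s_0}+\sum_{H<n<T}(T-n)n^{-2\s_0}$, the first term being $\le (T-H)\zeta(2\s_0)$ and the second controlled by integral comparison with $x^{-2\s_0}$. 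The off-diagonal integrals are bounded by $2/|\log(n/m)|$; combining $\log(n/m)\ge(n-m)/n$ with a splitting of the inner sum at $m=n/2$ gives an explicit bound for $\sum_{m<n<T}(mn)^{-\s_0}/\log(n/m)$ of order $T^{2-2\s_0}\log T$. Dividing by $T-H$ and verifying that $T^{2-2\s_0}\log T/(T-H)$ is decreasing on $[H_0,\infty)$ (this is where the hypothesis $\s_0>0.5208$ enters, as it ensures $H_0\ge e^{1/(2\s_0-1)}$) allows replacing $T$ by $H_0$ throughout; careful bookkeeping of all boundary contributions then reproduces the seven numerical terms of \eqref{def-epsilon1}, with the prefactor $\frac{4H_0}{H_0-H}$ arising from $(T-H)^{-1}\le(H_0-H)^{-1}$. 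This gives $\frac{1}{T-H}\int_H^T|D|^{2}\,dt\le\zeta(2\s_0)+\epsilon_1$.

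Combining the two mean-square bounds through Cauchy--Schwarz on the cross term,
\[
\frac{1}{T-H}\int_H^T|\zeta|^{2}\,dt\le (\zeta(2\s_0)+\epsilon_1)+2\sqrt{(\zeta(2\s_0)+\epsilon_1)\epsilon_2}+\epsilon_2 =\zeta(2\s_0)+\mathcal{E}_1,
\]
which is the first assertion. For the second, Jensen's inequality applied to the concave function $\log$ and the uniform probability measure $\frac{dt}{T-H}$ on $[H,T]$ gives
\[
\frac{1}{T-H}\int_H^T\log|\zeta|^{2}\,dt\le\log\Big(\frac{1}{T-H}\int_H^T|\zeta|^{2}\,dt\Big)\le\log(\zeta(2\s_0)+\mathcal{E}_1),
\]
and dividing by $2$ yields $\int_H^T\log|\zeta(\s_0+it)|\,dt\le\frac{T-H}{2}\log(\zeta(2\s_0)+\mathcal{E}_1)$.

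The main obstacle is the off-diagonal step in the Dirichlet polynomial estimate: producing the seven explicit terms of \eqref{def-epsilon1}, including the $\max(0,\cdot)$ that guarantees positivity throughout the admissible range of $\s_0$, requires a delicate splitting and numerical bound of $\sum_{m<n}(mn)^{-\s_0}/\log(n/m)$ with precise tracking of all boundary contributions, while simultaneously verifying the $T$-monotonicity that permits replacing every $T$-dependent quantity by its value at $T=H_0$.
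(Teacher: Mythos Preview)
Your architecture matches the paper's exactly: split $\zeta=D+R$ via Theorem~\ref{exp-app-fun-equ}, bound the diagonal by $\zeta(2\s_0)$, the remainder in $L^2$ by $\epsilon_2$, the cross term by Cauchy--Schwarz to get $\epsilon_3$, push all $T$-dependence down to $T=H_0$ by monotonicity, and finish with Jensen. The remainder and log steps are fine.

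The gap is in the off-diagonal. You propose bounding $1/\log(n/m)$ via $\log(n/m)\ge (n-m)/n$ together with a dyadic split at $m=n/2$, and then assert that ``careful bookkeeping reproduces the seven numerical terms'' of \eqref{def-epsilon1}. It will not. Those seven terms (including the two negative ones and the $\max(0,\cdot)$) come from a \emph{different} inequality, namely
\[
\frac{1}{\log\lambda}\le 1+\frac{\lambda^{1-\s_0}}{\lambda-1}\qquad(\lambda=m/n>1),
\]
which cleanly splits $\sum_{n<m<T}\frac{(nm)^{-\s_0}}{\log(m/n)}$ into $\sum_{n<m}(nm)^{-\s_0}$ (handled by completing the square $\frac12(\sum k^{-\s_0})^2-\frac12\sum k^{-2\s_0}$) and $\sum_{n<m}\frac{m^{1-2\s_0}}{m-n}$ (separable in $m$ and $k=m-n$). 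These two pieces, after integral comparison, are precisely the source of the constants in $E_{11},E_{12},E_{13},E_{14}$; your $n/(n-m)$ bound with a dyadic cut yields sums of a different shape and different constants. So either switch to the paper's inequality, or accept that your route gives \emph{some} explicit $\epsilon_1$ of the same order but not the one in the statement.

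Two smaller points. First, your explanation of $\s_0>0.5208$ (equivalent to $(2\s_0-1)\log H_0>1$) is essentially the monotonicity of the leading piece $E_{11}$, which is correct; but you never account for the upper constraint $\s_0<0.9723$, which in the paper is exactly what is needed for $E_{13}(\s_0,T)=\frac{2-\s_0}{2(1-\s_0)^2}T^{1-2\s_0}-\frac{\s_0}{(1-\s_0)^2}T^{-\s_0}$ to be decreasing on $[H_0,\infty)$. Second, the prefactor $\frac{4H_0}{H_0-H}$ is not just $(T-H)^{-1}\le(H_0-H)^{-1}$: it is $\frac{4T}{T-H}$ evaluated at $T=H_0$, the factor $T$ coming out when one writes the off-diagonal bound as $T\cdot(\text{decreasing in }T)$, and $T/(T-H)$ is itself decreasing.
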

For the rest of this article $H$, $T$, $\s_0$, and $\s$ satisfy
\begin{equation}
\label{conditions}
H_0 = 3.061\cdot 10^{10},
10^3 \le H   \le H_0 \le T,
0.5208< \s_0 < 0.9723 ,
\s_1 = 1.5002,
\s_0< \s < \s_1 .  
\end{equation}
\section{Approximate formula for $\zeta(\sigma+it)$ - Proof of Theorem \ref{exp-app-fun-equ}}
Let  $s=\s+it$ with $1/2<\s<1$ and $t\ge 2$. Let $x=ct$ with $c>\frac{1}{2\pi}$, and let $N$ be a positive integer.
Theorem \ref{exp-app-fun-equ} gives an explicit version of an approximation formula for zeta, as proven by Hardy and Littlewood in \cite{HL}. 
\begin{proof}
We start with the classical identity \cite[equation 3.5.3]{Tit}
 \begin{equation}\label{eq-form1-zeta}
\zeta(s) - \sum_{1\le n < x} \frac1{n^s} =\sum_{x\le n \le N} \frac1{n^s} + s \int_N^{\infty} \frac{((u))}{u^{s+1}}du - \frac{N^{1-s}}{1-s} -\frac12N^{-s},
\end{equation}
where $((u))=[u]-u+1/2$.
The summation formula \cite[equation 2.1.2]{Tit} gives
\[
\sum_{x \le n < N} \frac1{n^s}  
=  \int_x^{N} \frac{du}{u^s} 
  - \frac{((x))}{x^s} + s \int_x^{N} \frac{((u))}{u^{1+s}} du 
 =  \frac{N^{1-s}-x^{1-s}}{1-s}  - \frac{((x))}{x^s} + s \int_x^{N} \frac{((u))}{u^{1+s}} du .
\]
We have the bounds
\[
 \left|\frac{x^{1-s}}{1-s} \right| \le \frac{x^{1-\s}}{t}, \ 
 \left| \frac{((x))}{x^s} \right|\le  \frac{x^{-\s}}{2},\ 
 \left|s \int_N^{\infty} \frac{((u))}{u^{s+1}}du\right| \le \frac{|s|}2 \int_N^{\infty} \frac{1}{u^{\s+1}}du = \frac{|s|}{2\s N^{\s}}.
\]
Thus
\begin{equation}\label{eq-form2-zeta}
\Big|\zeta(s) - \sum_{1\le n < x} \frac1{n^s} \Big|
\le   x^{1-\s} t^{-1}
+ \frac{x^{-\s}}{2}
 + \Big| s \int_x^{N} \frac{((u))}{u^{1+s}} du  \Big|
 +  \frac{|s|}{2\s}  N^{-\s}
+\frac12N^{-\s}.
\end{equation}
The choice $x=ct$ is made to balance the error term $x^{1-\s} t^{-1}+ \frac{x^{-\s}}{2}$.
We appeal to the Fourier series of $((x))$ to obtain a smaller bound for the integral expression. 
For $u\notin\N$, we have \cite[p. 74]{Tit}
\[
((u))
=[u]-u+1/2 
= \frac1{\pi} \sum_{\nu=1}^{\infty} \frac{\sin(2\pi \nu u)}{\nu}.
\]
Lebesgue's bounded convergence theorem applies, and we can exchange the order of the integral and the summation.
We obtain
\begin{equation}\label{ineq1}
 \int_x^{N} \frac{((u))}{u^{1+s}} du 
 = \frac1{\pi} \sum_{\nu=1}^{\infty} \frac{1}{\nu} \int_x^{N} \frac{\sin(2\pi \nu u)}{u^{1+s}} du \\
=  \sum_{\nu=1}^{\infty} \frac{I( \nu) -I(- \nu) }{\nu} ,
\end{equation}
where the integral $I$ is given by 
\begin{equation}\label{ineq2}
I(h) 
= \frac1{2\pi i} \int_x^{N} \frac{ e^{  2i\pi (hu- \frac{t \log u}{2\pi}) }}{u^{\s+1}} du
 = \frac1{ 2\pi } \int_x^{N} F(h,u) \, d (e^{2\pi i( f(u)+h u)} )
\end{equation}
with
$F(h,u) =  \frac{u^{-\s}}{t - 2\pi u h}$ 
and $f(u) = -\frac{t \log u}{2\pi}$.
Since
\[
\frac{\partial}{\partial u} F(h,u)
= u^{-\sigma}\frac{  -\s t u^{-1}+2\pi h (\sigma +1)}{(t - 2\pi u h)^2} ,
\]
it is easy to check that $F(-\nu,u)$ is positive and decreases with $u$, and that $F(\nu,u)$ is negative and increases with $u$.
\\
We now apply the second mean value theorem from \cite[section 12.3]{Titb}:
\begin{lem} 
If $j(x)$ is integrable over $(a,b)$, and $\phi(x)$ is positive, bounded, and non-increasing, then there exists $\xi\in (a,b)$ such that
\[\int_a^b \phi(x) j(x)dx = \phi(a+0)\int_a^{\xi}j(x)dx.\]
\end{lem}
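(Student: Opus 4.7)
The plan is to reduce the assertion to an intermediate value theorem argument. Since $j$ is integrable on $(a,b)$, its indefinite integral
\[
J(x) = \int_a^x j(t)\,dt
\]
is continuous on $[a,b]$ and hence attains its minimum $m$ and maximum $M$ there. The heart of the proof is to establish the two-sided Bonnet bound
\[
m\,\phi(a+0) \;\le\; \int_a^b \phi(x) j(x)\,dx \;\le\; M\,\phi(a+0).
\]
Once this is in hand, the intermediate value theorem applied to $\phi(a+0) J(\cdot)$, which ranges continuously over $[m\phi(a+0),M\phi(a+0)]$, produces some $\xi\in[a,b]$ with $\phi(a+0) J(\xi)$ equal to the integral on the left; the degenerate case $\phi(a+0)=0$ forces $\phi\equiv 0$ by monotonicity and positivity, and is trivial.

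To prove the two-sided bound I would use a continuous analogue of Abel summation. First I approximate $\phi$ by a non-increasing step function: pick a partition $a=x_0<x_1<\cdots<x_n=b$ avoiding the at-most-countable jump set of $\phi$, set $\phi_k=\phi(x_k+0)$, and define $\phi_\varepsilon(x)=\phi_k$ on $[x_k,x_{k+1})$. A direct calculation followed by summation by parts gives
\[
\int_a^b \phi_\varepsilon(x) j(x)\,dx \;=\; \sum_{k=0}^{n-1}\phi_k\bigl(J(x_{k+1})-J(x_k)\bigr) \;=\; \phi_{n-1} J(b) + \sum_{k=0}^{n-2}(\phi_k-\phi_{k+1})\,J(x_{k+1}),
\]
using $J(a)=0$. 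Because $\phi$ is non-increasing and non-negative, the coefficients $\phi_{n-1}$ and $\phi_k-\phi_{k+1}$ are all $\ge 0$ and telescope to $\phi_0$. Substituting $m \le J(x_{k+1}),J(b) \le M$ therefore yields $m\phi_0 \le \int_a^b \phi_\varepsilon j\,dx\le M\phi_0$. Refining the partition so that $\phi_\varepsilon\to\phi$ pointwise almost everywhere and invoking dominated convergence with envelope $\phi(a+0)|j|$ transfers the inequality to $\int_a^b\phi j\,dx$ with $\phi_0$ replaced by $\phi(a+0)$.

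The main technical obstacle is that $\phi$ is only bounded and monotone, not absolutely continuous, so a direct integration by parts against $\phi'$ is unavailable; the step-function route circumvents this by using only differences of $\phi$. One last cosmetic point is ensuring $\xi$ lies in the open interval $(a,b)$ as stated: the intermediate value theorem delivers $\xi\in[a,b]$, and the only way it is forced to an endpoint is if the target value of $J(\xi)$ equals $J(a)=0$ or $J(b)$, in which case the continuity of $J$ together with the freedom to shift the outer partition nodes slightly allows us to select an interior $\xi$ realising the same value of $J$.
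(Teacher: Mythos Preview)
The paper does not supply its own proof of this lemma; it simply quotes the second mean value theorem from Titchmarsh's \emph{Theory of Functions}, \S12.3. Your argument is essentially the classical one found there: Abel summation on a non-increasing step-function approximant of $\phi$ gives the Bonnet bounds $m\,\phi(a+0)\le\int_a^b\phi\,j\,dx\le M\,\phi(a+0)$, and the intermediate value theorem applied to the continuous primitive $J$ produces the required $\xi$. So your approach matches the source the paper relies on.

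One caution: your final paragraph on forcing $\xi$ into the \emph{open} interval does not work as written (once you have passed to the limit there is no partition left to shift), and in fact the open-interval conclusion is false in general. Take $\phi\equiv 1$ and $j\equiv 1$ on $[0,1]$; then $\int_0^1\phi\,j=1$ and $\phi(0+)\int_0^{\xi}j=\xi$, so necessarily $\xi=1=b$. Titchmarsh himself states only $a\le\xi\le b$. This is a defect in the paper's wording rather than in your proof, and since the lemma is used only to obtain bounds of the form $|I(\pm\nu)|\le\frac{3}{2\pi}|F(\pm\nu,x)|$, the closed-interval version is entirely sufficient for the application.
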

First, we consider $I(-\nu) $.
We separate the real and imaginary part in $d\Big(e^{2\pi i( f(u)+h u)} \Big)$ in \eqref{ineq2} and
we apply the Lemma for $\phi(u) = F(-\nu,u) $. We consider $j(u)du=d(\cos(2\pi ( f(u)-\nu u)))$, and $j(u)du= d(\sin(2\pi ( f(u)-\nu u)))$ respectively.
We obtain that there exist $\xi_1,\xi_2 \in (x, N)$ such that
\begin{multline*}
2\pi I(-\nu) 
= F(-\nu,x) \cos(2\pi ( f(\xi_1)-\nu \xi_1)) - F(-\nu,x) e^{2\pi i ( f(x)-\nu x) } 
\\+ i F(-\nu,x) \sin(2\pi ( f(\xi_2)-\nu \xi_2)).
\end{multline*}
It follows that
\begin{equation}
\label{ineq3}
\left| I(-\nu) \right| 
 \le \frac{3}{2\pi}F(-\nu,x)
=   \frac{3}{2\pi} \frac{(ct)^{-\sigma} }{t + 2\pi ct \nu}
 \le   \frac{3}{(2\pi)^2}\frac{ c^{-\sigma-1} t^{-\sigma-1} }{\nu}.
\end{equation}
A similar argument applies to $I(\nu) $. 
We obtain
\begin{equation}\label{ineq4}
\left| I(\nu) \right| \le  - \frac{3}{2\pi}F(\nu,ct)
=   \frac{3}{2\pi} \frac{(ct)^{-\sigma} }{2\pi ct \nu-t}
\le 
\begin{cases}
\frac{3}{2\pi} \frac{ c^{-\sigma} t^{-\sigma-1} }{\nu-1} \text{ if }\nu\ge 2,\\
\frac{3}{2\pi}\frac{  c^{-\sigma} t^{-\sigma-1} }{2\pi c -1} \text{ if }  \nu=1.
\end{cases}
\end{equation}
Using the simplification 
$ \sum_{\nu=2}^{\infty}\frac{1}{\nu(\nu-1)} = 1$,
$\sum_{\nu=1}^{\infty}\frac{1}{\nu^2} = \zeta(2)$, and  
$\frac{|s|}{t}\le \sqrt{1+1/t^2}$, 
we put together \eqref{ineq1}, 
\eqref{ineq3}, and \eqref{ineq4}, and obtain the bound
\begin{multline*}
\Big| s \int_x^{N} \frac{((u))}{u^{1+s}} du \Big|
\le |s|  \sum_{\nu=1}^{\infty} \frac{ |I( \nu)| + |I(- \nu)| }{\nu} 
\\ \le \frac{3\sqrt{1+1/t^2}}{2\pi}   \Big( 1+ \frac1{2\pi c -1}+ \frac{ \zeta(2)}{2\pi c} \Big) c^{-\sigma} t^{-\sigma} .
\end{multline*}
Letting $N\to\infty$, inequality \eqref{eq-form2-zeta} becomes
\[\Big|\zeta(s) - \sum_{1\le n < ct} \frac1{n^s} \Big|
 \le 
 \Big( c
+ \frac{1}2
 +  \frac{3\sqrt{1+1/t^2}}{2\pi} \Big(1+ \frac1{2\pi c -1}+ \frac{ \zeta(2)}{2\pi c} \Big)
 \Big) (ct)^{-\sigma}  .
\]
 \end{proof}
\begin{rem}
A careful reading of Cheng's proof shows that his error term has size $\mathcal{O}(t^{1-2\s})$, instead of our $\mathcal{O}(t^{-\s})$.
This comes from he fact that he bounds directly the terms $\frac{N^{1-s}}{1-s}$, instead of eliminating them as we did.
\end{rem}
\section{Explicit upper bound for the second moment of zeta - Proof of Theorem \ref{moment-zeta}}
We recall that $\s_0, T, H$ are as in \eqref{conditions}.
By Theorem \ref{exp-app-fun-equ}, 
we have the identity
\begin{equation}\label{2nd-moment-zeta}
 \frac1{T-H} \int_H^T \left|\zeta(\s_0+it)\right|^2 dt 
 = D(\s_0,T,H) + E_1(\s_0,T,H)+ E_2(\s_0,T,H)+ E_3(\s_0,T,H),
\end{equation}
where
\begin{align*}
 & 
D(\s_0,T,H) = \frac1{T-H} \int_H^T   \sum_{1\le n < t} \frac1{n^{2\s_0}} dt , \\
& 
E_1(\s_0,T,H) =  \frac2{T-H} \int_H^T \sum_{1\le n <m< t} \frac{\cos(t\log(m/n))}{(nm)^{\s_0} } dt , \\
& 
E_2(\s_0,T,H) = \frac1{T-H} \int_H^T |R(\s_0+it)|^2 dt,\\
& 
E_3(\s_0,T,H) = \frac2{T-H}  \Re  \int_H^T   \sum_{1\le n < t}\frac{R(\s_0+it)}{n^{\s_0+it}} dt .
\end{align*}
We recall here some basic inequalities that we use throughout the following argument.
Let $A,B\in \N$. If $f$ is decreasing and positive, then 
\begin{equation}\label{ineq-sum-int} 
 \sum_{A\le j\le B} f(j) \le f(A) + \int_A^{B} f(u) du.
\end{equation}
For $\s_0>1/2$, 
we bound trivially the diagonal term:
\begin{equation}\label{bound-D}
D(\s_0,T,H) 
 \le  \zeta(2\s_0).
\end{equation}
We interchange summation order in the off-diagonal terms $E_1(\s_0,T,H)$ and use the fact that $\int_u^v \cos(at)dt \le \frac{2}{a}$ when $a\not=0$:
\[
E_1(\s_0,T,H) 
\le \frac4{T-H} \sum_{1\le n <m< T} \frac{(nm)^{-\s_0}}{ \log(m/n)} .
\]
We use the fact that, for $\lambda>1$ and $\s <1$, 
$\frac1{\log\lambda} 
\le 1+\frac{\lambda^{1-\s}}{\lambda-1}$. Taking $\lambda=\frac{m}{n}$, we obtain
\begin{equation}\label{bound1-E1}
E_1(\s_0,T,H) 
\le \frac4{T-H}  \sum_{1\le n <m< T}  (nm)^{-\s_0}  
+ \frac4{T-H}  \sum_{1\le n <m< T} \frac{m^{1-2\s_0}}{ m-n} .
\end{equation}
For the first sum, we complete the square
\[
\sum_{1\le n <m< T} (nm)^{-\s_0} 
 = \frac12 \Big(\sum_{k<T} k^{-\s_0} \Big)^2 - \frac12 \sum_{k<T}  k^{-2\s_0}
 = \frac12 \Big(\sum_{k<T} k^{-\s_0} \Big)^2 - \frac12\Big( \zeta(2\s_0) - \sum_{k \ge T}  k^{-2\s_0} \Big),
\]
and use \eqref{ineq-sum-int} with $f(t)=t^{-\s_0}$ and $f(t)=t^{-2\s_0}$ to bound the resulting sums. We obtain
\begin{equation}\label{ineq-1stpart-E1}
\sum_{1\le n <m< T} (nm)^{-\s_0} 
\le 
\frac{T^{2(1-\s_0)}}{2(1-\s_0)^2} 
- \frac{\s_0T^{1-\s_0}}{(1-\s_0)^2}
 + \frac{\s_0^2}{2(1-\s_0)^2} - \frac12 \zeta(2\s_0) 
- \frac{T^{1-2\s_0}}{2(1-2\s_0)} 
+ \frac{T^{-2\s_0}}2  .
\end{equation}
We consider $k=m-n$ and separate variables in the second sum of \eqref{bound1-E1} and use \eqref{ineq-sum-int}, with $f(t)=t^{1-2\s_0}$ and $f(t)=t^{-1}$, to bound the resulting sums:
\begin{multline}\label{ineq-2ndpart-E1}
\sum_{1\le n <m< T} \frac{m^{1-2\s_0}}{ m-n}
 \le \Big(\sum_{1\le m< T} m^{1-2\s_0} \Big) \Big(\sum_{1\le  k <T}  k^{-1} \Big) 
\\ \le \frac{(\log T)T^{2(1-\s_0)}}{2(1-\s_0)} + \frac{T^{2(1-\s_0)}}{2(1-\s_0)} +\log T + 1 - \frac1{2(1-\s_0)} -\frac{\log T}{2(1-\s_0)}.
\end{multline}
Together with \eqref{bound1-E1}, \eqref{ineq-1stpart-E1} and \eqref{ineq-2ndpart-E1}, we obtain
\begin{multline*}\label{ineq2-E1}
E_1(\s_0,T,H) 
\le \frac{4T}{T-H} \Big( 
 \frac{(\log T)T^{1-2\s_0}}{2(1-\s_0)} 
-  \frac{2\s_0-1}{2(1-\s_0)} \frac{\log T}{T}
+ \Big( \frac{1-3\s_0+3\s_0^2}{2(1-\s_0)^2} - \frac12 \zeta(2\s_0) \Big)\frac1T 
\Big. \\ \Big. +  \frac{2-\s_0}{2(1-\s_0)^2}  T^{1-2\s_0} 
- \frac{\s_0T^{-\s_0}}{(1-\s_0)^2} 
+ \frac{T^{-2\s_0}}{2(2\s_0-1)} 
+ \frac12 T^{-2\s_0-1} 
\Big).
\end{multline*}
We denote
\begin{align*}
& E_{11}(\s_0,T) =\frac{(\log T)T^{1-2\s_0}}{2(1-\s_0)} 
-  \frac{2\s_0-1}{2(1-\s_0)} \frac{\log T}{T} ,\\
& E_{12}(\s_0,T) = \Big( \frac{1-3\s_0+3\s_0^2}{2(1-\s_0)^2} - \frac12 \zeta(2\s_0) \Big)\frac1T ,\\ 
& E_{13}(\s_0,T) = \frac{2-\s_0}{2(1-\s_0)^2}  T^{1-2\s_0} 
- \frac{\s_0T^{-\s_0}}{(1-\s_0)^2} ,\\ 
& E_{14}(\s_0,T) = \frac{T^{-2\s_0}}{2(2\s_0-1)} 
+ \frac12 T^{-2\s_0-1} ,
\end{align*}
and we now study their behavior with respect to $T\ge H_0$. 
It is immediate that $E_{14}$ decreases with $T$. 
Considering the fact that 
$\frac{1-3\s_0+3\s_0^2}{2(1-\s_0)^2} - \frac12 \zeta(2\s_0)$
changes sign at $\s_0 = 0.679785\ldots$, we obtain
\[
E_{12}(\s_0,T) \le \max \big(0,E_{12}(\s_0,H_0) \big).
\]
For $0.5208 < \s_0 <1$, we find
\[
\frac{\partial E_{11}(\s_0,T)}{\partial T} 
 =
 \frac{- \big(T^{2(1-\s_0)}-1\big) \big( (2\s_0-1)(\log T) - 1 \big) + 2(1-\s_0) }{2(1-\s_0)T^2} 
\le 0,
\]
and, when $\s_0\le 0.9723$, that
\[
\frac{\partial  E_{13}(\s_0,T)}{\partial T} 
= \Big( - \frac{ (2-\s_0) (2\s_0-1) }{2}  T^{1-\s_0} + \s_0^2 \Big) \frac{T^{-1-\s_0}}{(1-\s_0)^2}
\le 0.
\] 
Thus $E_{11}(\s_0,T)$ and $E_{13}(\s_0,T)$ decrease with $T\ge H_0$.
We conclude that, for $T\ge H_0$ and $0.5208\le \s_0\le 0.9723$,
\begin{multline}
\label{bound-E1}
E_1(\s_0,T,H) 
\le \frac{4H_0}{H_0-H} \Big( 
 \frac{(\log H_0)H_0^{1-2\s_0}}{2(1-\s_0)} 
-  \frac{2\s_0-1}{2(1-\s_0)} \frac{\log H_0}{H_0}
\Big. \\ \Big. 
+ \frac{\max \Big(0, \frac{1-3\s_0+3\s_0^2}{2(1-\s_0)^2} - \frac{\zeta(2\s_0) }2 \Big)}{H_0} 
+  \frac{(2-\s_0) H_0^{1-2\s_0}  }{2(1-\s_0)^2}  
- \frac{\s_0 H_0^{-\s_0}}{(1-\s_0)^2} 
+ \frac{H_0^{-2\s_0}}{2(2\s_0-1)} 
+ \frac{H_0^{-2\s_0-1} }2 
\Big).
\end{multline}
Theorem \ref{exp-app-fun-equ} gives
\begin{equation}\label{bound-E2}
E_2(\s_0,T,H) 
 \le c_0^2 \frac1{T-H}  \int_H^T t^{-2\s_0} dt  
 \le   \frac{c_0 ^2}{2\s_0-1} \frac{H^{-(2\s_0-1)}-H_0^{-(2\s_0-1)}}{H_0-H}.
\end{equation}
We use the Cauchy-Schwarz inequality to bound $E_3$:
\begin{align}
E_3(\s_0,T,H) 
&\nonumber  \le 2\Big( \frac1{T-H}  \int_H^T |\Re R(s)|^2 dt \Big)^{\frac12} \Big( \frac1{T-H}  \int_H^T \Big| \sum_{1\le n < t} \frac1{n^{\s_0+it}} \Big|^2dt \Big)^{\frac12}
\\\nonumber & \le 2\sqrt{E_2(\s_0,T,H)\big(   D(\s_0,T,H) + E_1(\s_0,T,H) \big)}
\\ & \le 2 \sqrt{ \epsilon_2(\s_0,H) \big(\zeta(2\s_0) +  \epsilon_1(\s_0,H) \big)} .
\label{bound-E3}
\end{align}
The definitions of $\epsilon_1,\epsilon_2,\epsilon_3$ follow from \eqref{bound-E1}, \eqref{bound-E2}, and \eqref{bound-E3}.
The proof is achieved by putting together \eqref{2nd-moment-zeta}, \eqref{bound-D}, \eqref{bound-E1}, \eqref{bound-E2}, \eqref{bound-E3}, and by applying the following bound for concave functions
 \[
  \int_{H}^{T} \log |\zeta(\s_0+it)| dt
 \le \frac{T-H}2 \log \Big( \frac1{T-H} \int_H^T \left| \zeta(\s_0+it)\right|^2 dt \Big).
\]
\section{A lower bound for $ \log |\zeta(s)| $ when $\s>1$.}
\begin{lem}\label{bound-logzeta}
Let $2\le H\le T$ and $\s_1 = 1.5002$. Then
\begin{equation}\label{def-E2}
 \int_H^T \log|\zeta(\s_1+it)| dt  \ge - \mathcal{E}_2 , 
\text{ with }
\mathcal{E}_2 =  1.7655.
\end{equation}
\end{lem}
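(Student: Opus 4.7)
\medskip

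\textbf{Proof plan.}

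The plan is to expand $\log\zeta(\sigma_1+it)$ as an absolutely convergent Dirichlet series, integrate term-by-term in $t$, and then use the trivial bound $|\sin(T\log n)-\sin(H\log n)|\le 2$ on each oscillatory factor. Since $\sigma_1=1.5002>1$, the Euler product gives
\[
\log\zeta(\sigma_1+it)=\sum_{n\ge 2}\frac{\Lambda(n)}{n^{\sigma_1}\log n}\,n^{-it},
\]
where $\Lambda$ is the von Mangoldt function, and the series converges absolutely and uniformly in $t$. This is the key identity from which everything else follows.

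First I would integrate the series over $t\in[H,T]$, exchanging sum and integral by absolute convergence, and take real parts. A short computation gives
\[
\int_H^T\log|\zeta(\sigma_1+it)|\,dt
=\sum_{n\ge 2}\frac{\Lambda(n)}{n^{\sigma_1}\log^2 n}\bigl(\sin(T\log n)-\sin(H\log n)\bigr).
\]
Next, using $|\sin(T\log n)-\sin(H\log n)|\le 2$ term-by-term produces the clean lower bound
\[
\int_H^T\log|\zeta(\sigma_1+it)|\,dt\ge -2\,S(\sigma_1),
\qquad
S(\sigma_1):=\sum_{n\ge 2}\frac{\Lambda(n)}{n^{\sigma_1}\log^2 n}.
\]
Note that this bound is independent of $H$ and $T$, which is exactly what the statement requires.

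It remains to show $2S(\sigma_1)\le 1.7655$. The cleanest approach is to rewrite $S$ in terms of primes as $\sum_{p}\sum_{k\ge 1}\frac{1}{k^2\,p^{k\sigma_1}\log p}$, or equivalently to use the identity $S(\sigma_1)=\int_{\sigma_1}^{\infty}\log\zeta(s)\,ds$, which follows by integrating the Dirichlet series for $\log\zeta(s)$ in $s$. Either way, the bulk of $S$ comes from small primes and can be evaluated numerically to any desired precision, and the tail can be controlled by a standard estimate of the form $\sum_{p>P}\frac{1}{p^{\sigma_1}\log p}\le\frac{1}{\log^2 P}\cdot\frac{P^{1-\sigma_1}}{\sigma_1-1}$, together with a crude geometric bound for the $k\ge 2$ contribution (e.g.\ $\sum_{k\ge 2}p^{-k\sigma_1}\le\frac{p^{-2\sigma_1}}{1-p^{-\sigma_1}}$). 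I would truncate the prime sum at some moderate $P$ (a few hundred), compute the truncated sum to $\ge 4$ decimal places, and check that main term plus tail is at most $0.88275$.

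The main obstacle is not conceptual but purely numerical: the constant $\mathcal{E}_2=1.7655$ is tight enough that one needs a careful evaluation of the prime sum (and of the $k\ge 2$ correction, which contributes on the order of a few percent) to certify the bound. All other steps—the Dirichlet series expansion, the exchange of sum and integral, the term-by-term $\sin$ bound—are routine consequences of absolute convergence for $\sigma_1>1$.
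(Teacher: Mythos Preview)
Your proposal is correct and follows essentially the same route as the paper: expand $\log\zeta(\sigma_1+it)$ via the Euler product as $\sum_{n\ge 2}\Lambda(n)/(n^{\sigma_1+it}\log n)$, integrate term-by-term, apply $|\sin(T\log n)-\sin(H\log n)|\le 2$, and then numerically certify that $2\sum_{n\ge 2}\Lambda(n)/(n^{\sigma_1}\log^2 n)\le 1.7655$. The only difference is in the tail estimation for the numerical step: the paper truncates at $N_0=10^3$ and controls the tail by $\frac{1}{(\log N_0)^2}\bigl(-\tfrac{\zeta'}{\zeta}(\sigma_1)-\sum_{n\le N_0}\Lambda(n)n^{-\sigma_1}\bigr)$, whereas you propose a prime-by-prime truncation with an integral bound (note your stated tail bound should have $\log P$ rather than $\log^2 P$ in the denominator).
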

\begin{proof}
Let $s=\s_1+it$.
It follows from the Euler product that   
\[
 \log |\zeta(s)| 
=  \Re \sum_{n\ge2} \frac{\Lambda(n)}{(\log n)n^{s}} .
\]
Thus
\[  \int_H^T \log|\zeta(\s_1+it)| dt 
= \sum_{n\ge 2} \frac{\Lambda(n) \big(\sin(T\log n)-\sin(H\log n)\big) }{ (\log n)^2 n^{\s_1} }  
\ge - 2\sum_{n\ge 2} \frac{\Lambda(n)}{ (\log n)^2 n^{\s_1}} .
\]
We truncate the sum at $N_0=10^3$ and bound the tail
\[
\sum_{n> N_0}\frac{\Lambda(n)}{ (\log n)^2 n^{\s_1}} 
\le 
\frac1{(\log N_0)^2}\Big( -\frac{\zeta'}{\zeta}(\s_1)- \sum_{n\le N_0}\frac{\Lambda(n)}{n^{\s_1}} \Big) .
\]
We obtain
\[\int_H^T \log|\zeta(\s_1+it)| dt 
\ge -2\Big(  \frac{-\frac{\zeta'}{\zeta}(\s_1) }{(\log N_0)^2}
+ \sum_{n\le N_0}\frac{\Lambda(n)}{n^{\s_1}}\Big(\frac1{(\log n)^2}-\frac1{(\log N_0)^2}\Big) \Big) ,
\]
and a numerical calculation with Maple gives the value for the above left term.
\end{proof}
\section{Explicit bounds for $ \int_{\s_0}^{\s_1} \arg\zeta(\tau+iT)d\tau$.}
\begin{lem}\label{bound-arg}
Let $\eta=0.0001, \s_1 = 3/2+2\eta=1.5002$. Let $\s_0, T, H$ satisfy
$ \s_0 < \s_1 , 2 \le H  \le T$. 
Then
\[
 \int_{\s_0}^{\s_1} \arg\zeta(\tau+iT)d\tau -  \int_{\s_0}^{\s_1} \arg\zeta(\tau+iH)d\tau
\le \mathcal{E}_3(\s_0) \log (H T) + \mathcal{E}_4(\s_0,H) 
\]
with
\begin{align}
& \label{def-E3}
 \mathcal{E}_3(\s_0) =   \frac{\pi (1+2\eta) (\s_1-\s_0)}{4\log2} ,\\ 
& \label{def-E4}
\mathcal{E}_4(\s_0,H) =   \frac{\pi(\s_1-\s_0) }{\log2}  \log\Big(   3\frac{H+3(1+\eta) }{H- (1+2\eta)} \Big(\frac{3(1+\eta)/H +1}{2\pi}\Big)^{\frac{1+2\eta}2} 
\frac{\zeta(1+\eta)^4} {\zeta(2(1+\eta))^2}  \Big).
 \end{align}
\end{lem}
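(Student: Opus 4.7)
The natural approach is Backlund's method, which reduces the integrated bound to a pointwise bound on $|\arg\zeta(\tau + it)|$ for $\tau \in [\s_0, \s_1]$ and $t \in \{T, H\}$, and then integration over $\tau$. Fixing $t$ and defining $\arg\zeta(\s + it)$ by continuous variation from $\arg\zeta(+\infty + it) = 0$, the argument can only change by $\pm \pi$ at a zero of $\Re\zeta(u + it)$ as $u$ decreases. Since $\Re\zeta(u + it) > 0$ for $u$ large enough (where $\zeta(u) < 2$), this gives $|\arg\zeta(\s + it)| \le \pi n_t + O(1)$, where $n_t$ counts the zeros of $\Re\zeta(u + it)$ on a suitable initial segment $[\s, u_*]$.

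To bound $n_t$, I would introduce the auxiliary analytic function
\[
g_t(z) := \tfrac{1}{2}\left[\zeta(z + it) + \zeta(z - it)\right],
\]
which has poles only at $z = 1 \pm it$ and satisfies $g_t(u) = \Re\zeta(u + it)$ on the real axis. Jensen's formula applied to $g_t$ on two concentric disks centered at a well-chosen real point $c_*$, with outer-to-inner radius ratio $r_2/r_1$ producing the $\log 2$ denominator in $\mathcal{E}_3$, yields
\[
n_t \le \frac{1}{\log(r_2/r_1)}\left[\frac{1}{2\pi}\int_0^{2\pi}\log\left|g_t(c_* + r_2 e^{i\theta})\right|\,d\theta - \log|g_t(c_*)|\right].
\]
The natural choice of center is $c_* = 1 + \eta$: here $|g_t(c_*)| = |\Re\zeta(1+\eta + it)|$ admits a lower bound of the form $\zeta(2+2\eta)^2/\zeta(1+\eta)^4$ via the Euler-product estimate $|\zeta(\s + it)| \ge \zeta(2\s)/\zeta(\s)$ for $\s > 1$, applied in a symmetrized form, producing the factor $\zeta(1+\eta)^4/\zeta(2(1+\eta))^2$ in $\mathcal{E}_4$.

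For the upper bound on $\log|g_t|$ on the outer circle, I would combine Theorem \ref{exp-app-fun-equ} on the portion with $\Re z \ge 1/2$ together with the functional equation $\zeta(s) = \chi(s)\zeta(1-s)$ and Stirling's estimate $|\chi(s)| \sim (|\Im s|/(2\pi))^{1/2 - \Re s}$ on the portion with $\Re z < 1/2$. The exponent $(1+2\eta)/2$ in $\mathcal{E}_4$ emerges as $\max(\tfrac{1}{2} - \Re z)$ on the outer disk; the factor $H + 3(1+\eta)$ encodes the maximum of $|\Im(z + iH)|$ attained on the outer circle; and the factor $H - (1+2\eta)$ comes from bounding the distance from the center to the poles $s = 1 \pm iH$ through the inner disk. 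Putting the bounds together produces $n_t \le \frac{1}{\log(r_2/r_1)}\bigl[\tfrac{1+2\eta}{2}\log t + C(\eta,H)\bigr]$.

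Finally, the pointwise bound $|\arg\zeta(\tau + it)| \le \pi n_t + O(1)$ is integrated over $\tau \in [\s_0, \s_1]$ to obtain a bound of the form $\mathcal{E}_3 \log t + O(1)$, and adding the upper bound at $t = T$ to the lower bound at $t = H$ delivers the claimed $\mathcal{E}_3 \log(HT) + \mathcal{E}_4$. The main obstacle will be the precise bookkeeping in the upper bound of $\log|g_t|$ on the outer circle: one has to split the circle according to whether $\Re z$ falls below $1/2$, track the distance to the poles $s = 1 \pm it$ carefully, and choose the radii $r_1, r_2$ so that the exponent of $t$ is exactly $(1+2\eta)/2$ while still producing the correct $\log 2$ (or $2\log 2$) in the denominator of $\mathcal{E}_3$.
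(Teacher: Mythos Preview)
Your overall architecture is right---Backlund's method, Jensen's formula centered at $1+\eta$, radii with ratio $2$---but there is a genuine gap at the step where you bound $|g_t(1+\eta)|$ from below. You have $g_t(1+\eta)=\Re\zeta(1+\eta+it)$, and this quantity admits \emph{no} uniform positive lower bound: the Euler-product inequality $|\zeta(\s+it)|\ge\zeta(2\s)/\zeta(\s)$ controls the modulus, not the real part, and there is no ``symmetrized form'' that rescues this. For values of $t$ where $\arg\zeta(1+\eta+it)$ is near $\pm\pi/2$, your Jensen input $\log|g_t(1+\eta)|$ goes to $-\infty$ and the bound on $n_t$ blows up.

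The fix, which is exactly Rosser's modification of Backlund's trick used in the paper, is to work instead with
\[
f_t(\omega)=\tfrac12\bigl(\zeta(\omega+it)^N+\zeta(\omega-it)^N\bigr),
\]
so that $f_t(1+\eta)=R^N\cos(N\phi)$ with $R=|\zeta(1+\eta+it)|$ and $\phi=\arg\zeta(1+\eta+it)$. One then chooses a sequence of integers $N$ with $N\phi\to 0\pmod{2\pi}$, forcing $|f_t(1+\eta)|\sim R^N$, and \emph{now} the Euler-product lower bound $R\ge\zeta(2(1+\eta))/\zeta(1+\eta)$ applies. Dividing the resulting bound on the zero count by $N$ and letting $N\to\infty$ gives the stated constants. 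Two smaller remarks: the factor $H-(1+2\eta)$ in $\mathcal{E}_4$ does not come from the poles of $g_t$ (which sit at $1\pm it$, far outside the disk) but from the denominator $|1-s|$ in Rademacher's convexity bound $|\zeta(s)|\le 3\frac{|1+s|}{|1-s|}\bigl(\frac{|1+s|}{2\pi}\bigr)^{(1+\eta-\Re s)/2}\zeta(1+\eta)$; and the paper uses this Rademacher bound directly on the left half of the circle rather than the approximate functional equation plus Stirling.
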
 
It suffices to bound an integral of the form
\[ \int_{\s_0}^{\s_1} \arg\zeta(\tau+it)d\tau, \]
with $t\ge H$.
We only make use of the convexity bound for $\zeta(s)$.
\begin{proof}
Let $\omega\in\C$ and $N\in \N$.
Following Rosser's modification of Backlund's trick (\cite[equation (32)]{Back} and \cite[page 223]{Ros}), we introduce
$\displaystyle{ f_t(\omega)= \frac12\Big( \zeta(\omega+i t)^N+\zeta(\omega- i t)^N \Big) }$. 
We denote $n$ to be the number of real zeros of $f_t(\tau) = \Re \zeta(\tau+i t)^N$ in the interval $\s_0< \tau <\s_1$. 
The interval is split into $n+1$ subintervals and on each of them $ \arg \zeta(\tau+it)^N$ changes by at most $\pi$.
Thus
\begin{equation}\label{ineq-arg}
\left|  \int_{\s_0}^{\s_1} \arg\zeta(\tau+it)d\tau  \right|
= \frac1N\left| \int_{\s_0}^{\s_1}  \arg \zeta(\tau+it)^N d\tau\right|
 \le \frac{ (\s_1-\s_0)(n+1)\pi}{N} .
 \end{equation}
We denote $n(r)$ the number of zeros of $ f_t$ in the circle centered at $1+\eta+it$, and with radius $r$. 
For $r\ge 1/2+\eta$, the segment $[\s_0,\s_1]$ is contained in $[1+\eta-r, 1+\eta+r]$, thus $n \le n(r)$.
The following version of Jensen's formula \cite[p. 137, equation (2)]{Ste},
\[
\log |f_t(1+\eta)| + \int_0^{1+2\eta} \frac{n(r)}{r} dr = \frac1{2\pi} \int_{-\pi/2}^{3\pi/2} \log\left| f_t(1+\eta+(1+2\eta)e^{i\theta}) \right|d\theta,
\]
allows us to deduce an upper bound for $n$:
\begin{equation}\label{bound0}
n \le \frac1{\log2} \int_0^{1+2\eta} \frac{n(r)}{r} dr \le \frac1{2\pi\log2} \int_{-\pi/2}^{3\pi/2} \log\left| f_t(1+\eta+(1+2\eta)e^{i\theta}) \right|d\theta - \frac{\log |f_t(1+\eta)|}{\log2}. 
\end{equation}
We write $\zeta(1+\eta+it) = R e^{i\phi}$. Thus $f_t(1+\eta) = \Re \Big(\zeta(1+\eta+it)^N\Big) = R^N \cos(N\phi)$. 
We choose a sequence of $N$'s such that $\displaystyle{\lim_{N\to\infty} N\phi = 0 \pmod {2\pi}}$.
Thus
\begin{multline}\label{bound1}
\log |f_t(1+\eta) | 
= N\log((1+o(1)) R) 
= N\log((1+o(1)) |\zeta(1+\eta+it)|) 
\\ \ge N \log\big( \frac{\zeta(2(1+\eta))}{\zeta(1+\eta)} \big)  +o_N(1),
\end{multline}
where $o_N(1)\rightarrow0$ when $N\rightarrow\infty$.
We now split the integral in the left term of inequality \eqref{bound0} depending on the sign of $\cos \theta$.
For $ \theta\in (-\pi/2,\pi/2) $, $\Re (1+\eta+(1+2\eta)e^{i\theta}\pm it )>1+\eta>1$, and we use the trivial bound
\[ 
\left| \zeta(1+\eta+(1+2\eta)e^{i\theta}\pm it ) \right|
\le  \zeta(1+\eta) ,
\]
giving
\begin{equation} \label{bound2}
\int_{-\pi/2}^{\pi/2} \log\left| f_t(1+\eta+(1+2\eta)e^{i\theta}) \right|d\theta \le N \pi \log \big( \zeta(1+\eta)\big).
\end{equation}
For $ \theta\in (\pi/2,3\pi/2) $, we use Rademacher's bound \cite[equation (7.4)]{Rad}:
\[
\left|\zeta(s)\right| \le 3\frac{|1+s|}{|1-s|} \Big(\frac{|1+s|}{2\pi}\Big)^{\frac{1+\eta-\Re s}2} \zeta(1+\eta)
\]
with $s=1+\eta+(1+2\eta)e^{i\theta}\pm it$.
Since
\[ |1+s|  \le  t+ 3(1+\eta),\ 
 |1-s|  \ge |\Im s| \ge t- (1+2\eta), \text{ and }
0\le  1+\eta-\Re s \le 1+2\eta,\]
then
\begin{multline}\label{bound3}
\int_{\pi/2}^{3\pi/2} \log\left| f_t(1+\eta+(1+2\eta)e^{i\theta}) \right|d\theta
\\ \le N \pi  \log\Big( 3\frac{t+3(1+\eta) }{t- (1+2\eta)} \Big(\frac{t+3(1+\eta)}{2\pi}\Big)^{\frac{1+2\eta}2} \zeta(1+\eta) \Big).
\end{multline}
Together with \eqref{bound0}, \eqref{bound1}, \eqref{bound2}, and \eqref{bound3}, we deduce
\begin{multline}
n 
\le \frac{N }{2\log2}  \log\Big( 3\frac{t+3(1+\eta) }{t- (1+2\eta)} \Big(\frac{3(1+\eta) +t}{2\pi}\Big)^{\frac{1+2\eta}2} 
\frac{\zeta(1+\eta)^4} {\zeta(2(1+\eta))^2} \Big)  +o_N(1) \\
\le \frac{N (1+2\eta) }{4\log2} \log t
+ \frac{N }{2\log2}  \log\Big( 3\frac{t+3(1+\eta) }{t- (1+2\eta)} \Big(\frac{3(1+\eta)/t +1}{2\pi}\Big)^{\frac{1+2\eta}2} 
\frac{\zeta(1+\eta)^4} {\zeta(2(1+\eta))^2} \Big)  +o_N(1).
\end{multline}
Together with \eqref{ineq-arg} and letting $N\to\infty$, we obtain
\begin{multline*}
 \left|  \int_{\s_0}^{\s_1} \arg\zeta(\tau+it)d\tau  \right|   
 \le 
\frac{\pi (1+2\eta) (\s_1-\s_0)}{4\log2}  \log t
\\+
\frac{\pi(\s_1-\s_0) }{2\log2}  \log\Big(   3\frac{t+3(1+\eta) }{t- (1+2\eta)} \Big(\frac{3(1+\eta)/t +1}{2\pi}\Big)^{\frac{1+2\eta}2} 
\frac{\zeta(1+\eta)^4} {\zeta(2(1+\eta))^2}  \Big).
\end{multline*}
Observing that the second term decreases with $t\ge H$ achieves the proof.
\end{proof}
\section{Explicit upper bounds for $N(\s,T)$ - Proof of Theorem \ref{main-density}}
\begin{proof}
We recall that $\s,\s_0,\s_1,H$ and $T$ satisfy \eqref{conditions}.
We consider the number $N(\s,T)$ of zeros $\varrho=\beta+i\gamma$ of zeta in the rectangle $\s< \beta <1$ and $H< \gamma <T$.
Since $N(\s,H)=0$, we have
\begin{equation}\label{boundN1}
N(\s,T)  
 \le \frac1{\s-\s_0} \int_{\s_0}^{\s_1} \Big( N(\tau,T) -N(\tau,H) \Big) d\tau .
\end{equation}
It follows from a lemma of Littlewood (see \cite[(9.9.1)]{Tit}) that
\[
 \int_{\s_0}^{\s_1} \Big( N(\tau,T) -N(\tau,H) \Big)  d\tau = - \frac1{2\pi i} \int_{\mathcal{R}} \log \zeta(s) ds ,
\]
where $\mathcal{R}$ is the rectangle with vertices $\s_0+iH$, $\s_1+iH$, $\s_1+iT$, and $\s_0+iT$.
Thus
\begin{multline}\label{boundN2}
N(\s,T)  
\le   \frac{1}{2\pi(\s-\s_0)}  \Big(
\int_{H}^{T} \log| \zeta(\s_0+it)| dt
+   \int_{\s_0}^{\s_1} \arg \zeta(\tau+iT) d\tau 
\Big. \\ \Big.
- \int_{\s_0}^{\s_1} \arg \zeta(\tau+iH) d\tau 
  - \int_{H}^{T} \log |\zeta(\s_1+it)| dt
\Big).
\end{multline}
We use Theorem \ref{moment-zeta}, Lemma \ref{bound-logzeta}, and Lemma \ref{bound-arg} respectively to bound these integrals:
\begin{align*}
& \int_{H}^{T} \log |\zeta(\s_0+it)| dt 
 \le  \frac{T-H}{2} \log \Big(  \zeta(2\s_0)  + \mathcal{E}_1(\s_0,H) \Big),
\\
& - \int_{H}^{T} \log |\zeta(\s_1+it)| dt \le \mathcal{E}_2 ,
\\
&
 \int_{\s_0}^{\s_1} \arg \zeta(\tau+iT) d\tau 
-  \int_{\s_0}^{\s_1} \arg \zeta(\tau+iH) d\tau
\le  \mathcal{E}_3(\s_0) \log  (HT) +  \mathcal{E}_4(\s_0,H) ,
\end{align*}
where the $\mathcal{E}_i$'s are defined respectively in \eqref{def-E1}, \eqref{def-E2}, \eqref{def-E3}, and  \eqref{def-E4}.
We obtain
\[N(\s,T)  
\le b_1(\s_0,H) (T-H) + b_2(\s_0,H)   \log (TH) + b_3(\s_0,H),
\]
with
\begin{equation}\label{def-bi}
b_1(\s_0,H) = \frac{\log \Big(  \zeta(2\s_0)  + \mathcal{E}_1(\s_0,H) \Big) }{4\pi(\s-\s_0)}  ,\ 
b_2(\s_0,H)= \frac{\mathcal{E}_3(\s_0)}{2\pi(\s-\s_0)} ,\ 
b_3(\s_0,H)=\frac{\mathcal{E}_2 +\mathcal{E}_4(\s_0,H) }{2\pi(\s-\s_0)} . 
\end{equation}
It follows
\[N(\s,T)  
\le c_1  T + c_2   \log T + c_3 ,
\ \text{with}\ 
c_1 = b_1  ,\ 
c_2 = b_2 ,\ 
c_3 = -b_1 H + b_2 \log H + b_3.
\]
\end{proof}
\clearpage
Table \ref{table1} records values of the $b_i$'s and $c_i$'s computed for $H_0=3.061\cdot 10^{10}$. 
Specific choices of parameters $\sigma_0$ and $H$ are chosen in order to obtain good bounds for $N(\s,T)$ when $T$ is asymptotically large. 
The values of $\s_0,b_1,c_1,b_2$, and $c_2$ displayed in the table are rounded up to $4$ decimal places. We take the ceiling of the values of $H,b_3$, and $c_3$.
\\
\begin{table}[h!]
\centering
\caption{$N(\s,T) \le b_1  (T-H) + b_2  \log (TH) + b_3 $ and $N(\s,T) \le c_1  T + c_2  \log T + c_3$, for $T\ge H_0$.}
\label{table1}
\begin{tabular}{|r|r|r|r|r|r|r|}
\hline
$\s$ & $\s_0$ & $H$ & $b_1=c_1 $ & $b_2=c_2 $ & $b_3 $& $c_3 $
\\
\hline
0.60 & 0.5229 & $19\,399$     & 4.2288 & 2.2841 & 333 &  $-81\,673$\\ \hline
0.65 & 0.5552 & $40\,105$     & 2.4361 & 1.7965 & 262 &  $-97\,414$\\ \hline
0.70 & 0.5873 & $91\,470$     & 1.4934 & 1.4609 & 213 &  $-136\,370$\\ \hline
0.75 & 0.6096 & $169\,119$    & 1.0031 & 1.1442 & 167 &  $-169\,449$\\ \hline
0.76 & 0.6136 & $188\,973$    & 0.9355 & 1.0921 & 160 &  $-176\,604$\\ \hline
0.77 & 0.6175 & $210\,645$    & 0.8750 & 1.0437 & 153 &  $-184\,134$\\ \hline
0.78 & 0.6213 & $234\,346$    & 0.8205 & 0.9986 & 146 &  $-192\,120$\\ \hline
0.79 & 0.6250 & $260\,321$    & 0.7714 & 0.9566 & 140 &  $-200\,644$\\ \hline
0.80 & 0.6287 & $288\,853$    & 0.7269 & 0.9176 & 134 &  $-209\,795$\\ \hline
0.81 & 0.6324 & $320\,270$    & 0.6864 & 0.8812 & 129 &  $-219\,667$\\ \hline
0.82 & 0.6361 & $354\,951$    & 0.6495 & 0.8473 & 124 &  $-230\,367$\\ \hline
0.83 & 0.6398 & $393\,341$    & 0.6156 & 0.8157 & 119 &  $-242\,009$\\ \hline
0.84 & 0.6435 & $435\,955$    & 0.5846 & 0.7862 & 115 &  $-254\,724$\\ \hline
0.85 & 0.6472 & $483\,393$    & 0.5561 & 0.7586 & 111 &  $-268\,658$\\ \hline
0.86 & 0.6510 & $536\,357$    & 0.5297 & 0.7327 & 107 &  $-283\,978$\\ \hline
0.87 & 0.6548 & $595\,670$    & 0.5053 & 0.7085 & 104 &  $-300\,872$\\ \hline
0.88 & 0.6587 & $662\,291$    & 0.4827 & 0.6857 & 101 &  $-319\,555$\\ \hline
0.89 & 0.6626 & $737\,343$    & 0.4617 & 0.6644 & 97  &  $-340\,272$\\ \hline
0.90 & 0.6667 & $822\,142$    & 0.4421 & 0.6443 & 95  &  $-363\,301$\\ \hline
0.91 & 0.6708 & $918\,225$    & 0.4238 & 0.6253 & 92  &  $-388\,959$\\ \hline
0.92 & 0.6750 & $1\,027\,390$ & 0.4066 & 0.6075 & 89  &  $-417\,606$\\ \hline
0.93 & 0.6793 & $1\,151\,729$ & 0.3905 & 0.5906 & 87  &  $-449\,647$\\ \hline
0.94 & 0.6838 & $1\,293\,683$ & 0.3754 & 0.5747 & 84  &  $-485\,543$\\ \hline
0.95 & 0.6883 & $1\,456\,079$ & 0.3612 & 0.5596 & 82  &  $-525\,807$\\ \hline
0.96 & 0.6930 & $1\,642\,194$ & 0.3478 & 0.5452 & 80  &  $-571\,018$\\ \hline
0.97 & 0.6977 & $1\,855\,803$ & 0.3352 & 0.5316 & 78  &  $-621\,815$\\ \hline
0.98 & 0.7026 & $2\,101\,249$ & 0.3232 & 0.5187 & 76  &  $-678\,911$\\ \hline
0.99 & 0.7077 & $2\,383\,498$ & 0.3118 & 0.5063 & 74  &  $-743\,087$\\ \hline
\end{tabular}
\end{table} 
\clearpage
\noindent
{\bf Aknowledgments.}
I would like to thank Olivier Ramar\'e for his comments on this article.
%

\end{document}